\documentclass[reqno,10pt]{amsart}
\usepackage{latexsym}
\usepackage[english]{babel}
\usepackage{xspace}
\usepackage[bookmarksnumbered,colorlinks]{hyperref}
\usepackage{graphics}
\usepackage{enumerate}
\usepackage{dsfont}
\newtheorem{theorem}{Theorem}[section]
\newtheorem{corollary}[theorem]{Corollary}
\newtheorem{lemma}[theorem]{Lemma}
\newtheorem{proposition}[theorem]{Proposition}
\theoremstyle{definition}
\newtheorem{definition}[theorem]{Definition}
\theoremstyle{remark}
\newtheorem{remark}[theorem]{Remark}
\newcommand{\bt}{\begin{theorem}}                               
\newcommand{\et}{\end{theorem}}                           
\newcommand{\bco}{\begin{corollary}}                           
\newcommand{\eco}{\end{corollary}}                             
\newcommand{\bd}{\begin{definition}}                        
\newcommand{\ed}{\end{definition}}                             
\newcommand{\bl}{\begin{lemma}}                            
\newcommand{\el}{\end{lemma}}                              
\newcommand{\bpr}{\begin{proposition}}                  
\newcommand{\epr}{\end{proposition}}                   
\newcommand{\bere}{\begin{remark}}              
\newcommand{\ere}{\end{remark}}                 
\newcommand{\beq}{\begin{equation}}
\newcommand{\eeq}{\end{equation}}
\def\bal#1\eal{\begin{align}#1\end{align}}                      
\def\baln#1\ealn{\begin{align*}#1\end{align*}}         
\def\bml#1\eml{\begin{multline}#1\end{multline}}       
\def\bmln#1\emln{\begin{multline*}#1\end{multline*}}  
\def\bga#1\ega{\begin{gather}#1\end{gather}}
\def\bgan#1\egan{\begin{gather*}#1\end{gather*}}
\newcommand{\de}{\mathrm{d}}                        
\newcommand{\N}{\ensuremath{\mathbb{N}}\xspace}     
\newcommand{\Z}{\ensuremath{\mathbb{Z}}\xspace}  
\newcommand{\Q}{\ensuremath{\mathbb{Q}}\xspace}
\newcommand{\R}{\ensuremath{\mathbb{R}}\xspace}     
\newcommand{\K}{\ensuremath{\mathbb{K}}\xspace}

\newcommand{\inte}{\int_0^1\!\!}

\newcommand{\df}{{\rm d}}
\newcommand{\dist}{{\rm d}}

\hyphenation{Lo-ren-tzian}

\title[Infinitely many geodesics between two points]%
{A remark on the Morse Theorem about infinitely many geodesics between two points}

\author[E. Caponio]{Erasmo Caponio}
\address{Dipartimento di Matematica, \hfill\break\indent
Politecnico di Bari, Via Orabona 4,\hfill\break\indent
70125, Bari, Italy}
\email{caponio@poliba.it}

\author[M. A. Javaloyes]{Miguel \'Angel Javaloyes}
\address{Departamento de Matem\'aticas, \hfill\break\indent
Universidad de Murcia, Campus de Espinardo\hfill\break\indent
30100 Espinardo, Murcia, Spain}
\email{majava@um.es}

\thanks{This research is a result of the
activity developed within the Spanish-Italian Acci\'on Integrada HI2008.0106/Azione Integrata
Italia-Spagna IT09L719F1. \\
MAJ is partially supported by Regional J.
Andaluc\'{\i}a Grant P09-FQM-4496,  MICINN project MTM2009-10418, and
Fundaci\'{o}n S\'{e}neca project 04540/GERM/06. This research is a
result of the activity developed within the framework of the Programme
in Support of Excellence Groups of the Regi\'{o}n de Murcia, Spain, by
Fundaci\'{o}n S\'{e}neca, Regional Agency for Science and Technology
(Regional Plan for Science and Technology 2007-2010).}

\subjclass[2010]{53C20, 53C22, 57B30, 58B20}

\keywords{Geodesics, Morse theory, Finsler metrics.}

\begin{document}
\begin{abstract}
We show that any two non-conjugate points on a forward or backward complete connected Finsler manifold can be joined by infinitely many geodesics which are not covered by finitely many closed ones, provided that the Betti numbers of the based loop space  grow unbounded.
\end{abstract}

\maketitle
\section{Introduction}
A celebrated result by M. Morse \cite[Theorem 13.3, p. 239]{Morse96}  states that any two points on a complete connected Riemannian manifold $M$ can be joined by infinitely many geodesics provided that the homology $H_*(\Omega M, \K)$ of the based loop space of $M$, with respect to any coefficient field $\K$ is non trivial in infinitely many dimensions. 
By a theorem of J-P. Serre \cite{Serre51}, the assumption on the non triviality of the homology groups is satisfied if there exists $i>0$ such that $H_i(M,\K)$ is non trivial -- e.g. if $M$ is compact or non contractible. 

A related issue to the above is to find topological conditions under which the infinitely many geodesics expected from that result are ``geometrically distinct'', in the sense that they are not covered by finitely many closed geodesics of $M$,  as it doesn't happen, for example, for two non-antipodal points on a standard sphere which are connected by infinitely many ones whose  supports  are contained in the same closed geodesic.

Let $N(p,q,L)$ be the function which counts the number of geodesics between the points $p$ and $q$ having length strictly less than $L$. If the geodesics joining $p$ to $q$ are covered by a finite number $m$ of closed ones, then $N(p,q,L)\leq 2m (1+L/L_0)$, 
where $L_0$ is the length of the shortest prime closed geodesic. Thus,  superlinear growth of  $N(p,q,L)$ implies  that there exist infinitely many geometrically distinct geodesics.
For example, it is well known that in compact, connected, simply connected, rationally hyperbolic manifolds,  the integers $\mu_i=\sum_{j\leq i}\beta_j(\Omega M,\Q)$, $\beta_j=\dim(H_j(\Omega M,\Q))$, grow exponentially (see, for example, \cite[Prop. 5.6]{Patern99}) and then, if the points $p$ and $q$ are non-conjugate,  $N(p,q,L)$ has also exponential growth since, by a result of M. Gromov \cite{Gromov78}, there exists a constant  $C>0$ depending on $M$, such that
\beq N(p,q,L)\geq \sum_{j=1}^{C(L-1)}\beta_j(\Omega M,\K).\label{gromov}\eeq
On the other hand, for rationally elliptic manifolds,
the numbers $\mu_i$ grow at most polynomially. More generally, from the main theorem in \cite{McClea87} and \eqref{gromov}, it follows that, if $p$ and $q$ are non-conjugate, $N(p,q,L)$ has at least quadratic growth in any  simply connected, compact, connected Riemannian manifold that does not have the integral cohomology of a rank one symmetric space (see \cite[Corollary B]{McClea87}).

M. Tanaka in \cite[Problem C, p.183]{Tanaka82} asked if for a compact simply connected Riemannian manifold $(M,g)$ it is enough to assume  that the sequence of Betti numbers  
$\beta_i(\Omega M,\K)$ is unbounded to get that any two  points on $M$ can be joined by infinitely many geometrically distinct geodesics. He stated also that in  case the two point are non-conjugate the problem can be solved positively by an analogous technique as in the proof of Theorem 4.1 in \cite{Tanaka82}. 

In this note we give a detailed proof of the Tanaka's statement for 
a connected, complete, Riemannian manifold $(M,g)$. 
Rather than remaining in a Riemannian background, we will take advantage of the recently developed infinite-dimensional Morse theory for Finsler geodesics, \cite{CaJaMa09,CaJaMa11}, and will give  our result  in the larger class of Finsler metrics.
\bt\label{teo}
Let $(M,F)$ be a connected, forward or backward complete Finsler manifold and let $p$ and $q$ be two non-conjugate points of $M$. If the sequence of Betti numbers $\beta_i(\Omega M,\K)$, $i\in \N$, is unbounded, then there exist infinitely many geometrically distinct geodesics between $p$ and $q$.
\et
The assumption on the Betti numbers in Theorem~\ref{teo} is analogous to that on the Betti numbers of the free loop space in \cite{GroMey69a}. Actually the proof of Theorem~\ref{teo} is also based on an estimate on the Morse indices of iterated closed geodesics given in \cite{GroMey69a} and indeed we will not use \eqref{gromov}. We point out that when $M$ is a simply connected, compact, connected manifold that does not have the integral cohomology of a rank one symmetric space, then Theorem~\ref{teo} applies since, as shown in \cite{McClea87}, the Betti numbers $\beta_i(\Omega M,\Z_p)$ grow unbounded. 

We also point out that forward or backward completeness in Theorem \ref{teo} can be weakened into
condition \eqref{cap} below. This is because Palais-Smale condition and Morse theory for the energy functional of a Finsler metric  hold under that assumption  (see Remark 10 in \cite{CaJaMa09} and the comments before Theorem 5.2 in \cite{CaJaSa10}).

\section{Finsler metrics and Jacobi fields}
Let us introduce some definitions and properties about Finsler metrics. For more details,  see for example \cite{BaChSh00}.

Let $M$ be a  manifold, $TM$ its tangent bundle, $\pi:TM\rightarrow M$ the natural projection and $F:TM\rightarrow
[0,\infty)$ a continuous function. Then, we say that $F$ is a Finsler metric on $M$ if
\begin{enumerate}[(i)]
 \item $F$ is
smooth in $TM$
away from the zero section,
\item $F$ is positive homogeneous,
that is, $F(\lambda v)=\lambda v$ for every $v\in TM$ and
$\lambda>0$,
\item $F^2$ is fiberwise strongly convex, that is,
the {\it fundamental tensor} 
\[g_v(u,w):=\frac
12 \frac{\partial^2}{\partial t\partial s}
F^2(v+tu+sw)|_{t=s=0},\]
where $u,w\in T_{\pi(v)}M$, is positive definite for every $v\in TM$.
\end{enumerate}
Let us denote as $C_{p,q}$ the set of 
piecewise smooth curves from $p$ to $q$ defined on the interval $[0,1]$.  Then, we can define the {\em Finsler distance} from $p$ to $q$ 
as
\[{\dist}_F(p,q)=\inf_{\alpha\in C_{p,q}}\int_0^1 F(\dot \alpha(s))\df s\in [0,\infty),\]
where $\dot\alpha$ denotes the derivative of $\alpha$. Finsler metrics are not reversible in general and, as a consequence, the distance $\dist_F$ is not necessarily symmetric. Thus  we distinguish between forward and backward Cauchy sequences and forward and backward balls, denoted respectively by $B^+_F(x,r)$ and $B^-_F(x,r)$, for every $x\in M$ and $r>0$,
and defined as
\[B_F^+(p,r)=\{q\in M: \dist_F(p,q)<r\}, \quad
B^-_F(p,r)=\{q\in M:\dist_F(q,p)<r\}.\]
Moreover, we will denote by $\bar{B}^+_F(x,r)$ and $\bar{B}^-_F(x,r)$ their respective closures. 

We say that a Finsler metric satisfies condition $(\mathrm{cap})$ if 
\beq\label{cap}
\bar{B}^+_F(x,r)\cap \bar{B}^-_F(x,r)\text{ is compact,  for every $x\in M$ and $r>0$.}
\eeq
We remark that this condition is weaker than forward or backward completeness (see Example 4.6 in \cite{CaJaSa10}).

The {\it Cartan
tensor} is defined as the trilinear form
\[C_v(u_1,u_2,u_3)=\frac 14\left.
\frac{\partial^3}{\partial s_3\partial s_2\partial
s_1}F^2\left(v+\sum_{i=1}^3s_i u_i \right)\right|_{s_1=s_2=s_3=0}.\] Observe that $C_v$ is
symmetric, that is, the value does not depend on the order of
$u_1$, $u_2$ and $u_3$ and it is homogeneous of degree $-1$, that is, $C_{\lambda v}=\frac{1}{\lambda}C_v$. Moreover, if one of the
vectors $u_1$, $u_2$ and $u_3$ is proportional to $v$, then $C_v(u_1,u_2,u_3)=0$.
Given a vector field $V$ in an open set $U\subset M$ non-zero everywhere, we 
define the  connection $\nabla^V$ as the only linear connection on $U$ satisfying
\begin{eqnarray}
&\nabla^V_XY-\nabla^V_YX=[X,Y],\label{torsionfree} \\
&X( g_V(Y,Z))=g_V(\nabla^V_XY,Z)+g_V(Y,\nabla^V_XZ)+2 C_V(\nabla^V_XV,Y,Z),\label{almostg}
\end{eqnarray}
for any vector fields  $X$, $Y$ and $Z$ on $U$.  We observe that this connection can be in some way identified with the Chern connection (see \cite{radMathAnn04}).  Furthermore, we define  the curvature tensor of $g_V$  as
\[R^V(X,Y)Z=\nabla^V_X\nabla^V_YZ-\nabla^V_Y\nabla^V_XZ-
\nabla^V_{[X,Y]}Z.\] 
Observe that the connection $\nabla^V$ and, as a consequence, its curvature tensor $R^V$ are homogeneous of degree 0 in $V$.
For a smooth curve $\alpha:[a,b]\subseteq\R\rightarrow M$, and a vector field $Y$ along $\alpha$ (non zero everywhere),  the  above connection induces a covariant derivative, which will be denoted as $D_\alpha^YX$, for every 
vector field $X$ along $\alpha$. Indeed, this covariant derivative is determined by the following property: if $\bar{X}$, $\bar{Y}$ and $\bar{T}$ are vector fields extending $X$, $Y$ and $\dot \alpha$, then $\nabla^{\bar{Y}}_{\bar{T}}\bar{X}=D^Y_\alpha X$, that is $\nabla^{\bar{Y}}_{\bar{T}}\bar{X}$ does not depend on the extensions. 

On the other hand,
if $T=\dot\alpha$,  $R^T(V,T)T$ can be  defined by using
the covariant derivative. Indeed, choose a variation of
$\Psi:[-\varepsilon,\varepsilon]\times [a,b]\rightarrow M$ of
$\alpha$ with variation vector field $V$ and define
$\beta_t:[-\varepsilon,\varepsilon]\rightarrow M$ for $t\in [a,b]$
as $\beta_t(w)=\Psi(w,t)$ and $\gamma_w:[a,b]\rightarrow M$ as
$\gamma_w(t)=\Psi(w,t)$ for $w\in [-\varepsilon,\varepsilon]$.
Then
\[R^T(V,T)T:=\left.\left(D^{T_w}_{\beta_t}D^{T_w}_{\gamma_w}T_w-D^{T_w}_{\gamma_w}D^{T_w}_{\beta_t}T_w\right)\right|_{w=0},\]
where $T_w=\dot\gamma_w$ and $V_w=\dot \beta_t$. It can be shown that
this quantity does not depend  on the  variation $\Psi$.

Let $\Lambda M$ be the space of periodic curves $c\colon S^1\to M$ of  Sobolev class $H^1$ with respect to an auxiliary Riemannian metric $h$ of $M$. Moreover, given $p,q\in M$, let $\Omega_{pq}M$ be the space of $H^1$-curves from $p$ to $q$ parametrized on the interval $[0,1]$. 
 Periodic geodesics and geodesics connecting the points $p$ and $q$ in  $(M,F)$ are the critical points of the energy functional 
\[E_F(\alpha)=\frac 12\int_0^1 F^2(\dot\alpha) \df s,\]
defined respectively on the manifolds $\Lambda M$ and  $\Omega_{pq}M$. 
Thus using \eqref{torsionfree} and \eqref{almostg}, we can prove that they satisfy the equation
$D_\gamma^TT=0$, with $T=\dot\gamma$. Then the  equation of Jacobi fields of a geodesic $\gamma$, that is the equation satisfied by a variational vector field along $\gamma$ obtained as variation by geodesics, is given by $D^2_\gamma J=R^T(T,J)T$. 

\section{Morse index of iterated geodesics between two points}
The second variation of the energy functional $E_F$  at a geodesic $\gamma$ is given by
the index form
\[
I_\gamma (V,W)=\int_0^1 (g_T(V',W')+g_T(R^T(T,V)T, W)\de s,
\]
where $V$ and $W$ are $H^1$-vector fields along $\gamma$, $T=\dot\gamma$, $V'=D_\gamma^TV$ and $W'=D_\gamma^TW$. 

Let $c$ be a closed geodesic; the tangent space to $\Lambda M$ at $c$ is given by the $H^1$-vector fields $V$ along $c$ such that $V(0)=V(1)$, whereas the tangent space to
$\Omega_{pq}M$, at a geodesic $\gamma\colon [0,1]
\to M$ joining $p$ to $q$, consists of the $H^1$-vector fields along $\gamma$ such that $V(0)=0=V(1)$. 
The index form $I_c$ and $I_{\gamma}$ are defined respectively in  $T_{c}\Lambda M\times T_{c}\Lambda M$ and $T_{\gamma}\Omega_{pq}M\times T_{\gamma}\Omega_{pq}M$ and their indices are the dimensions of the maximal subspaces, in the respective tangent spaces, where they are negative definite.

In the following  we will also consider the situation when $p=q\in c(S^1)$ and consequently we will consider the index form $I_c$ as defined in $T_{c}\Lambda M\times T_{c}\Lambda M$ or in $T_c\Omega_{p}M\times T_c\Omega_{p}M$,  where  $\Omega_{p}M:=\Omega_{pp}M$.

Let us call $\lambda(\gamma)$ the index of $I_{\gamma}$,  $\tilde \lambda(c)$ the index of $I_c$ and $\lambda(c)$ the index of the restriction of $I_c$ to the space $T_{c}\Omega_{p}M\times T_{c}\Omega_{p}M$.

Clearly, $T_c \Omega_{p}M\subset T_c \Lambda M$ and consequently
$\tilde \lambda(c)\geq \lambda(c)$. The difference between the two indices is the so-called {\em concavity index} $\mathrm{con} (c)$. 

It is well known that $\mathrm{con} (c)$ is uniformly bounded (see Equation (1.4) in \cite{BaThZi82}). For the reader convenience, we will now prove this property in the Finslerian setting.
The following lemma  in \cite{BiMePi08} is an infinite dimensional version of a result in \cite[page 120]{Art57}.
\begin{lemma}\label{BTZ}
Let $X$ be a Hilbert space and let $B$ be a continuous symmetric essentially positive bilinear form on $X$. If $W\subset X$ is a closed subspace and $S$ denotes the $B$-orthogonal space to $W$, then:
\[n_-(B)=n_-\left(B|_{W\times W}\right)+n_-\left(B|_{S\times S}\right)+\dim (W\cap S)-
\dim (W\cap\ker (B))\]
\end{lemma}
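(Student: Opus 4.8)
The plan is to deduce Lemma~\ref{BTZ} from a sequence of elementary facts about an essentially positive symmetric bilinear form, i.e.\ one whose associated self-adjoint operator differs from a positive one by a compact (hence finite-rank-approximable) perturbation; concretely, the key point is that $B$ has a well-defined, finite negative index $n_-(B)$ and a finite-dimensional kernel restricted to the part where it is degenerate, and that a maximal $B$-negative subspace, a maximal $B$-null subspace compatible with it, and $\ker B$ can all be chosen simultaneously. First I would fix notation: write $X = X_- \oplus X_0 \oplus X_+$ for a $B$-orthogonal splitting into a maximal negative subspace (dimension $n_-(B)$), the kernel $X_0 = \ker B$ (finite-dimensional by essential positivity), and a positive part $X_+$ on which $B$ is positive definite. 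The identity to be proved is purely a statement of linear algebra relating the negative indices of $B$, of $B|_{W\times W}$, of $B|_{S\times S}$, and the dimensions of $W\cap S$ and $W\cap \ker B$, where $S = W^{\perp_B} = \{x : B(x,w)=0 \ \forall w\in W\}$.

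Next I would carry out the argument in two stages. Stage one: handle the nondegenerate model. Assume first that $B$ is an inner product plus a finite-rank negative piece and that $B$ is nondegenerate (so $\ker B = 0$ and the correction term $\dim(W\cap\ker B)$ vanishes); here the classical statement — essentially the one on page~120 of \cite{Art57} in finite dimensions, lifted to Hilbert space as in \cite{BiMePi08} — is that $X = W + S$ need not hold but $W\cap S$ measures exactly the defect, and one gets $n_-(B) = n_-(B|_{W\times W}) + n_-(B|_{S\times S}) + \dim(W\cap S)$. The proof of this is by choosing a maximal $B$-negative subspace $N_W$ of $W$ and a maximal $B$-negative subspace $N_S$ of $S$, checking $N_W + N_S$ is $B$-negative (using that $W$ and $S$ are $B$-orthogonal, at least modulo $W\cap S$, where $B$ is automatically degenerate when restricted), and then showing the span $N_W \oplus N_S \oplus (W\cap S)$ realizes the index of $B$ — the last being the content requiring that $B$ restricted to $W\cap S$ is negative semidefinite, which follows because on $W\cap S$ one has $B(x,x) = B(x,x)$ with $x$ simultaneously $B$-orthogonal to all of $W\supseteq W\cap S$, forcing $B|_{W\cap S}$ to vanish identically, hence contributing its full dimension to any maximal $B$-null, $B$-nonpositive extension. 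Stage two: reduce the general (possibly degenerate) case to stage one by passing to the quotient $X/\ker B$, on which $B$ descends to a nondegenerate essentially positive form $\bar B$; the subspaces $W$ and $S$ push forward to $\bar W, \bar S$ with $\bar S = \bar W^{\perp_{\bar B}}$, the indices are unchanged ($n_-(B) = n_-(\bar B)$, etc.), $\dim(\bar W\cap \bar S) = \dim(W\cap S) - \dim(W\cap\ker B)$ — this is the bookkeeping identity that produces the correction term — and then applying stage one to $\bar B$ gives the claimed formula.

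The main obstacle I anticipate is the careful accounting in the degenerate case: one must verify precisely that $S + \ker B$, the preimage of $\bar S$, and $W\cap(S+\ker B)$ behave as expected, and in particular that the image of $W\cap S$ under the quotient map has dimension $\dim(W\cap S) - \dim(W\cap S\cap\ker B)$ and that $W\cap S\cap\ker B = W\cap\ker B$ (the latter because $\ker B \subseteq S$ always, since a vector in $\ker B$ is $B$-orthogonal to everything, in particular to $W$). A secondary subtlety is making the Hilbert-space version rigorous where finite-dimensional intuition is used: one needs that maximal $B$-negative subspaces are finite-dimensional and that a maximal negative subspace of a closed subspace can be extended — this is exactly where "essentially positive" and the closedness of $W$ enter, and it is already established in \cite{BiMePi08,Art57}, so I would cite those rather than reprove the functional-analytic core, and concentrate the written proof on the linear-algebra identity and the quotient reduction.
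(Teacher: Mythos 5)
First, a point of comparison: the paper does not prove Lemma~\ref{BTZ} at all --- it is imported as a black box from \cite{BiMePi08}, as an infinite-dimensional version of the statement on p.~120 of \cite{Art57}. So you are attempting something the authors deliberately avoided. Your overall architecture is sound and, as far as it goes, correct: the reduction of the degenerate case to the nondegenerate one by passing to $X/\ker B$ is right, and your bookkeeping is exactly the needed identity. In particular $\ker B\subseteq S$, so $S+\ker B=S$, the modular law gives $(W+\ker B)\cap S=(W\cap S)+\ker B$, hence $\bar W\cap\bar S=\pi(W\cap S)$ has dimension $\dim(W\cap S)-\dim(W\cap\ker B)$; and since $W\cap\ker B$ (resp.\ $\ker B$) sits inside the radical of $B|_{W\times W}$ (resp.\ $B|_{S\times S}$), all three negative indices are unchanged under the quotient. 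That part of the plan would survive being written out in full.

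The genuine gap is in your ``stage one''. The subspace $N_W\oplus N_S\oplus(W\cap S)$ is only negative \emph{semi}definite ($B$ vanishes identically on $W\cap S$), so even after invoking the standard fact that, for a nondegenerate essentially positive form, a negative semidefinite subspace has dimension at most $n_-(B)$, your construction proves only the inequality $n_-(B)\geq n_-\left(B|_{W\times W}\right)+n_-\left(B|_{S\times S}\right)+\dim(W\cap S)$. Nothing in your sketch rules out $n_-(B)$ being strictly larger, and that reverse inequality is the actual content of the Artin/Biliotti--Mercuri--Piccione result. To get it you need three further facts: (a) $W+S$ is closed of codimension exactly $\dim(W\cap S)$ in $X$ --- this is where essential positivity and the closedness of $W$ really enter, via the Fredholmness of the compression to $W$ of the operator representing $B$ (its kernel is $W\cap S$ and it has index zero, whence the codimension count); (b) $n_-\left(B|_{(W+S)\times(W+S)}\right)=n_-\left(B|_{W\times W}\right)+n_-\left(B|_{S\times S}\right)$, which holds because $W$ and $S$ are mutually $B$-orthogonal and $W\cap S$ lies in the radical of both restrictions; (c) the elementary bound $n_-(B)\leq n_-\left(B|_{Y\times Y}\right)+\operatorname{codim}Y$ for a subspace $Y$ of finite codimension, applied to $Y=W+S$. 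If you intend to cite \cite{BiMePi08} for all of this, say so and drop the half-proof of stage one; as written, the sketch reads as a proof of the identity but establishes only one of the two inequalities.
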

 
 Let ${\mathds J}(c)$ be the vector space of Jacobi fields along $c$. 
 We will consider the following subspaces: 
 \begin{align*}
 \mathds J^{cl}(c)&=\{J\in{\mathds J}(c):J(0)=J(1)\},\\
 \mathds J^p(c)&=\{J\in{\mathds J}(c):J(0)=J(1), J'(0)=J'(1)\},\\
 \mathds J^0(c)&=\{J\in{\mathds J}(c):J(0)=J(1)=0\}.
 \end{align*}
 Observe that the kernel of
 $I_c$ is equal to $\mathds J^p(c)$ and the kernel of $I_c$ restricted  to the space $T_{c}\Omega_{p}M\times T_{c}\Omega_{p}M$ is  $\mathds J^0(c)$.
 Moreover, 
 the $I_c$-orthogonal subspace to $T_c\Omega_{p}(M)$ is equal to $\mathds J^{cl}(c)$.
 \begin{proposition}\label{indices}
Let us define the bilinear form $b:\mathds J^{cl}(c)\times \mathds J^{cl}(c)
 \rightarrow \R$ as 
 \[b(J_1,J_2)=g_T(J'_1(1)-J'_1(0),J_2(0)).\]  
  Then
 \[\tilde{\lambda}(c)=\lambda(c)+\dim \mathds J^0(c)-\dim\big(\mathds J^0(c)\cap \mathds J^p(c)\big)+n_-(b).\]
 \end{proposition}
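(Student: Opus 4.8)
The plan is to apply Lemma~\ref{BTZ} with $X=T_c\Lambda M$, $B=I_c$, and the closed subspace $W=T_c\Omega_pM$, and then to read off the terms it produces. First I would check that the hypotheses hold: $W=T_c\Omega_pM$ is closed, being the intersection of the kernels of the bounded evaluation maps $V\mapsto V(0)$ and $V\mapsto V(1)$; and $I_c$ is a continuous, symmetric, essentially positive bilinear form on $T_c\Lambda M$. Continuity is clear from the expression $I_c(V,W)=\int_0^1 g_T(V',W')\,\de s+\int_0^1 g_T(R^T(T,V)T,W)\,\de s$ with $T=\dot c$, while essential positivity follows from an estimate $I_c(V,V)\geq\delta\|V\|_{H^1}^2-K\|V\|_{L^2}^2$ for suitable $\delta,K>0$ (using the uniform fiberwise positivity of $g_T$ over the compact set $c(S^1)$), together with the fact that $V\mapsto\|V\|_{L^2}^2$ is a compact quadratic form on $T_c\Lambda M$.

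By the observations recorded just before the statement, the $I_c$-orthogonal complement of $W$ in $T_c\Lambda M$ is $S=\mathds J^{cl}(c)$, which is finite dimensional since the space of Jacobi fields along $c$ is. Lemma~\ref{BTZ} then gives
\[
\tilde\lambda(c)=n_-(I_c)=n_-\big(I_c|_{W\times W}\big)+n_-\big(I_c|_{S\times S}\big)+\dim(W\cap S)-\dim\big(W\cap\ker I_c\big).
\]
Here $n_-(I_c|_{W\times W})=\lambda(c)$ by definition. Since a vector field in $T_c\Omega_pM$ vanishes at $0$ and $1$, a Jacobi field lying in $W\cap\mathds J^{cl}(c)$ must vanish at $0$ and $1$, so $W\cap S=\mathds J^0(c)$; and since $\ker I_c=\mathds J^p(c)$, the same reasoning gives $W\cap\ker I_c=\mathds J^0(c)\cap\mathds J^p(c)$. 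Substituting these equalities into the displayed identity yields exactly the asserted formula, provided we show $n_-(I_c|_{S\times S})=n_-(b)$.

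It remains to identify the restriction of $I_c$ to $\mathds J^{cl}(c)\times\mathds J^{cl}(c)$ with $b$. Since $c$ is a geodesic, $D_c^TT=0$, so \eqref{almostg} reduces along $c$ to the compatibility rule $\frac{\de}{\de s}g_T(Y,Z)=g_T(D_c^TY,Z)+g_T(Y,D_c^TZ)$. Integrating the first term of $I_c(J_1,J_2)$ by parts,
\[
\int_0^1 g_T(J_1',J_2')\,\de s=g_T(J_1'(1),J_2(1))-g_T(J_1'(0),J_2(0))-\int_0^1 g_T(J_1'',J_2)\,\de s,
\]
and inserting the Jacobi equation $J_1''=R^T(T,J_1)T$ cancels the curvature integral in $I_c$. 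Hence $I_c(J_1,J_2)=g_T(J_1'(1),J_2(1))-g_T(J_1'(0),J_2(0))$, and, as $J_2(1)=J_2(0)$ for $J_2\in\mathds J^{cl}(c)$, this equals $g_T(J_1'(1)-J_1'(0),J_2(0))=b(J_1,J_2)$. In particular $b$ is symmetric and $n_-(I_c|_{S\times S})=n_-(b)$, which completes the argument. The only step requiring real care is the verification that Lemma~\ref{BTZ} applies — the essential positivity of $I_c$ and the closedness of $T_c\Omega_pM$; the rest is the integration by parts above and the bookkeeping of the finite dimensional spaces $\mathds J^{cl}(c)$, $\mathds J^p(c)$, $\mathds J^0(c)$.
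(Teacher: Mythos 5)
Your proof is correct and follows exactly the paper's route: apply Lemma~\ref{BTZ} with $B=I_c$, $X=T_c\Lambda M$, $W=T_c\Omega_pM$, and identify $I_c|_{\mathds J^{cl}(c)\times \mathds J^{cl}(c)}=b$ by integration by parts. You simply spell out the details (essential positivity, the identifications $W\cap S=\mathds J^0(c)$ and $W\cap\ker I_c=\mathds J^0(c)\cap\mathds J^p(c)$, and the cancellation of the Cartan term along a geodesic) that the paper leaves implicit.
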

 \begin{proof}
 That the bilinear form $I_c$ is continuous, symmetric and essentially positive can be shown as in Lemma 2 of \cite{CaJaMa09}. Moreover,   applying integration by parts we deduce that $I_c|_{\mathds J^{cl}(c)\times \mathds J^{cl}(c)}=b$. Then the proof is a direct application of Lemma \ref{BTZ} with $B=I_c$, $X=T_c\Lambda M$ and $W=T_c\Omega_pM$.
 \end{proof}
 Observe that $\ker(b)=\mathds J^0(c)+ \mathds J^p(c)$. Then 
 from Proposition \ref{indices}, it follows that 
 \begin{equation}
 \label{conc}
 0\leq\mathrm{con}(c)=\dim \ker(b)+n_-(b)-\dim \mathds J^p(c)\leq 2\dim M.
 \end{equation} 
Let us remark that the upper bound in the last inequality can be improved until
$\dim M-1$ as in \cite[Eq. (1.5)]{BaThZi82}.

Let  $p$ and $q$ be two points in $M$ such that there 
exists a prime closed geodesic $c$ passing through them. We call $\gamma^0$ the geodesic arc of $c$ from $p$ to $q$. Choosing a suitable affine parametrization of $c$, denoted by $c_q$, such that $c(0)=c(1)=q$, we call $\gamma^m$ the geodesic composed by $\gamma^0$ and $c^m_q$, where $c^m_q(s)=c_q(m s)$ is the $m$-th iteration of $c_q$.
\begin{lemma}\label{arcs}
Let $(M,F)$ be a Finsler manifold and $p,r,q$ three points in $M$ such that there
exists a geodesic $\gamma\colon[0,1]\to M$ from $p$ to $q$ passing through $r$. Let $\gamma_1$ be the arc of $\gamma$ from $p$ to $r$ and $\gamma_2$ the one from $r$ to $q$ (both parametrized on $[0,1]$). Then
\begin{equation}\label{desind}
\lambda (\gamma)\geq \lambda(\gamma_1)+\lambda(\gamma_2).
\end{equation}
\end{lemma}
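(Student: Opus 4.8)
The plan is to produce a subspace of $T_\gamma\Omega_{pq}M$ of dimension $\lambda(\gamma_1)+\lambda(\gamma_2)$ on which the index form $I_\gamma$ is negative definite, by gluing together negative subspaces for $I_{\gamma_1}$ and $I_{\gamma_2}$ transplanted to the two subarcs and extended by zero. Write $r=\gamma(t_0)$ with $t_0\in(0,1)$, so that, after the affine reparametrizations fixed in the statement, $\gamma_1(s)=\gamma(t_0 s)$ and $\gamma_2(s)=\gamma(t_0+(1-t_0)s)$ for $s\in[0,1]$. First I would associate to each $H^1$ field $V$ along $\gamma_1$ with $V(0)=V(1)=0$ the field $W$ along $\gamma$ defined by $W(t)=V(t/t_0)$ on $[0,t_0]$ and $W(t)=0$ on $[t_0,1]$; since $V$ vanishes at its endpoints, $W$ is $H^1$ and satisfies $W(0)=W(t_0)=W(1)=0$, so $W\in T_\gamma\Omega_{pq}M$.

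The heart of the argument is the scaling identity $I_{\gamma_1}(V,V)=t_0\,I_\gamma(W,W)$, which I would prove using only the homogeneity properties recalled in Section~2: $g_v$ and $R^V$ --- hence also the connection $\nabla^V$ and the covariant derivative it induces along a curve --- are homogeneous of degree $0$ in $v$, while $R^T(T,\cdot)T$ is linear in each of its three slots. Writing $T=\dot\gamma$ and $T_1=\dot\gamma_1$, one has $T_1(s)=t_0\,T(t_0 s)$, $g_{T_1(s)}=g_{T(t_0 s)}$, $D^{T_1}_{\gamma_1}V(s)=t_0\,(D^{T}_\gamma W)(t_0 s)$ and $R^{T_1}(T_1,V)T_1(s)=t_0^2\,\big(R^T(T,W)T\big)(t_0 s)$; plugging these into the index form of $\gamma_1$ and substituting $u=t_0 s$ gives
\[
I_{\gamma_1}(V,V)=t_0\int_0^{t_0}\!\big(g_T(W',W')+g_T(R^T(T,W)T,W)\big)\,\de u=t_0\,I_\gamma(W,W),
\]
the last equality because $W\equiv 0$ on $[t_0,1]$. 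In particular $I_{\gamma_1}(V,V)$ and $I_\gamma(W,W)$ have the same sign (as $t_0>0$), so a maximal $I_{\gamma_1}$-negative subspace of $T_{\gamma_1}\Omega_{pr}M$ is carried isomorphically onto a $\lambda(\gamma_1)$-dimensional subspace $\widetilde{\mathcal V}_1\subset T_\gamma\Omega_{pq}M$ of fields supported in $[0,t_0]$ on which $I_\gamma$ is negative definite; the same reasoning applied to $\gamma_2$ (scaling factor $1-t_0$) yields a $\lambda(\gamma_2)$-dimensional negative subspace $\widetilde{\mathcal V}_2\subset T_\gamma\Omega_{pq}M$ of fields supported in $[t_0,1]$.

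To finish I would note that $\widetilde{\mathcal V}_1\cap\widetilde{\mathcal V}_2=\{0\}$ --- an $H^1$ field supported in both $[0,t_0]$ and $[t_0,1]$ vanishes almost everywhere, hence identically --- and that $I_\gamma$ has no cross terms: for $W_1\in\widetilde{\mathcal V}_1$ and $W_2\in\widetilde{\mathcal V}_2$ the integrand defining $I_\gamma(W_1,W_2)$ is zero almost everywhere, since $W_1,W_2$ and their covariant derivatives have a.e.\ disjoint supports. Hence $I_\gamma$ is negative definite on the direct sum $\widetilde{\mathcal V}_1\oplus\widetilde{\mathcal V}_2\subset T_\gamma\Omega_{pq}M$, which has dimension $\lambda(\gamma_1)+\lambda(\gamma_2)$, and \eqref{desind} follows. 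I expect the only genuinely delicate point to be the scaling identity for the index form under the affine reparametrization of a subarc --- one must be slightly careful with the degree-$0$ homogeneity of $\nabla^V$, $g_v$ and $R^V$ in the reference direction --- everything else being routine bookkeeping with supports.
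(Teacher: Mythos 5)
Your proof is correct and follows essentially the same route as the paper: the paper likewise transplants maximal negative subspaces for $I_{\gamma_1}$ and $I_{\gamma_2}$ into $T_\gamma\Omega_{pq}M$ by affine reparametrization and extension by zero, and uses exactly your scaling identity (stated there as $I_\gamma(W_1,W_1)=\tfrac{1}{t_0}I_{\gamma_1}(V_1,V_1)$). Your explicit check that the cross terms of $I_\gamma$ vanish on the two transplanted subspaces is a point the paper leaves implicit behind its orthogonality remark, so nothing is missing.
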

\begin{proof}
Consider the linear maps 
\baln
&V_1\in T_{\gamma_{1}}\Omega_{pr}M\longmapsto W_1\in T_{\gamma}\Omega_{pq}M,&&V_2\in T_{\gamma_2}\Omega_{rq}M\longmapsto W_2\in T_{\gamma}\Omega_{pq}M,\\
&W_1=\begin{cases} V_1(s/t_0)&s\in [0,t_0],\\
		  0&s\in (t_0, 1],
    \end{cases}&&
W_2=\begin{cases} 0&s\in [0,t_0],\\
V_2\left(\dfrac{s-t_0}{1-t_0}\right)&s\in (t_0,1].
       \end{cases}
\ealn
They are injective maps and their  images constitute two orthogonal subspaces with respect to the scalar product $\langle V,W\rangle=\inte g_T(V',W')\de s$ in $T_{\gamma}\Omega_{pq}M$.
As
\baln 
I_{\gamma}(W_1,W_1)=\frac{1}{t_0}I_{\gamma_1}(V_1,V_1)&&\text{and} && I_{\gamma}(W_2,W_2)=\frac{1}{1-t_0}I_{\gamma_2}(V_2,V_2),
\ealn
\eqref{desind} follows.
\end{proof}
We observe that Lemma 1 in \cite{GroMey69a} about the Morse index of iterated Riemannian closed geodesics can be reproduced word for word in Finsler closed geodesics  (cf. also last section in \cite{Radema89}).
\bl\label{gmlemma}
Let $c\colon S^1\to M$ be a closed geodesic of $(M,F)$ and $m\in \N\setminus\{0\}$.  Consider the $m$-th iterate $c^m(s)=c(m s)$ of $c$. Then either 
$\tilde \lambda(c^m)=0$ for all $m$ or there exist $a_1, a_2>0$ such that for each $m>m'$ 
\beq\label{gm}
\tilde \lambda (c^m)-\tilde \lambda (c^{m'})\geq a_1(m-m')- a_2.
\eeq
\el
\begin{remark}\label{gammam0}
Observe that if $\tilde\lambda(c^m)=0$ for any $m\in\N$, then also $\lambda (\gamma^{m-1})=0$ for all $m\in\N$. Indeed, first notice that $\lambda(c_r^m)=0$
for all $r\in c(S^1)$. Let $\gamma_{qp}$ be the geodesic arc from $q$ to $p$ such that $c_p$ is the composition of $\gamma^0$ and $\gamma_{qp}$. From Lemma~\ref{arcs}, 
\[0=\lambda (c^{m}_p)\geq \lambda(\gamma^{m-1})+\lambda (\gamma_{qp}),\] 
hence $\lambda(\gamma^{m-1})=0$.
\end{remark}
An analogous result to Lemma \ref{gmlemma} holds also for the iterated geodesics between two points. 
\bl\label{GM2}
Let $c\colon S^1\to M$ be a closed geodesic passing through the points $p$ and $q$. For $m\in \N\setminus\{0\}$, let $\gamma^m= \gamma^0+ c^m_q$ then
either  $\lambda(\gamma^m)=0$ for all $m\in\N$ or there exist $b_1, b_2>0$ such that for each $m>m'+1$
\beq\label{gm2}
\lambda (\gamma^m)-\lambda (\gamma^{m'})\geq b_1(m-m')- b_2.
\eeq
\el
\begin{proof}
Observe that, by using Lemma~\ref{arcs}, we deduce that
\begin{align}
&\lambda(\gamma^m)\geq \lambda(c_p^m)+\lambda(\gamma_{pq}),\nonumber\\
&\lambda(c_p^{m'+1})\geq \lambda(\gamma^{m'})+\lambda(\gamma_{qp}),\label{mprimouno}
\end{align}
where $\gamma_{pq}$ and $\gamma_{qp}$ are the arcs of $c$ that go from $p$ to $q$ and from $q$ to $p$ respectively. Then summing the inequalities in
\eqref{mprimouno},
\begin{align}
\lambda(\gamma^m)-\lambda(\gamma^{m'})&\geq \lambda(c_p^m)-\lambda(c_p^{m'+1})+
\lambda(\gamma_{pq})+\lambda(\gamma_{qp}),\nonumber
\end{align}
and recalling that $\tilde\lambda(c^m)=\lambda(c^m_p)+\mathrm{con}(c^m)$ and inequality \eqref{conc},
\begin{align}\label{dessuperlinear}
\lambda(\gamma^m)-\lambda(\gamma^{m'})&\geq \tilde{\lambda}(c^m)-\tilde{\lambda}(c^{m'+1})-\mathrm{con}(c^m)+\mathrm{con}(c^{m'+1})+
\lambda(\gamma_{pq})+\lambda(\gamma_{qp})\nonumber\\
&\geq \tilde{\lambda}(c^m)-\tilde{\lambda}(c^{m'+1})-2\dim M+
\lambda(\gamma_{pq})+\lambda(\gamma_{qp}).
\end{align}
 By Remark \ref{gammam0}, if there exists $n_0\in\N$ such that $\lambda(\gamma^{n_0})\not=0$, then   $\tilde\lambda(c^{n_0+1})\not=0$.  
  Thus we get \eqref{gm2} from \eqref{gm} and \eqref{dessuperlinear}.
\end{proof}
\begin{proof}[Proof of Theorem~\ref{teo}]
Let $N_k(p,q)$ be the number of geodesics between $p$ and $q$ having index $k$.  As $\Omega_{pq}M$ is homotopically equivalent to $\Omega M$ (that can be seen, for example, as in \cite[Th 1.2.10]{Kli}), we can assume that the sequence of Betti numbers $\beta_k(\Omega_{pq} M,\K)$ is unbounded.  By the Morse relations  (for the Riemannian case see  Corollary (3) at p. 338 of \cite{Palais63} and the subsequent remark, for the Finsler extension, see Theorem 9 of \cite{CaJaMa09}), we know  that $\beta_k(\Omega_{pq} M,\K)\leq N_k(p,q)$.
By contradiction, assume that there exist $(c_j)_{j=1,\ldots,l}$ prime closed geodesics covering each geodesic arc between $p$ and $q$. If, for all $j$ and $m$, 
$\lambda(\gamma^m_j)=0$,  the Morse relations give immediately a contradiction. Otherwise there exist $\bar m_1, \dots, \bar m_h$, $h\leq l$, such that $\lambda(\gamma^{\bar m_i}_i)\neq 0$ and from Lemma~\ref{GM2} there exist positive constants $b_{i1}, b_{i2}$ such that $\lambda (\gamma^m_i)-\lambda (\gamma^{m'}_i)\geq b_{i1}(m-m')- b_{i2}$. Hence we deduce that, for each $i$,  denoted by  $\gamma^{m'}_i$ the first iterate to have index $k$,  the number of subsequent iterates that may have index $k$ is less than $\max\{\frac{b_{i2}}{b_{i1}},2\}$, thus it is independent of $k$. Hence $N_k(p,q)$ and then also $\beta_k(\Omega_{pq} M,\K)$ are uniformly bounded with respect to $k$ by  $K=h \cdot \max_{i=1,\ldots,h}\{\frac{b_{i2}}{b_{i1}},2\}$, getting a contradiction.
\end{proof}
 
\section*{Acknowledgment}
We became aware of the work by M. Tanaka thanks to the comments to the first version of this note by an anonymous  referee that we would like to acknowledge.

\end{document}